\theoremstyle{plain} 
\newtheorem{theorem}{\indent\sc Theorem}[section]
\newtheorem{lemma}[theorem]{\indent\sc Lemma}
\newtheorem{proposition}[theorem]{\indent\sc Proposition}
\theoremstyle{definition} 
\newtheorem{remark}[theorem]{\indent\sc Remark}
\newtheorem{example}[theorem]{\indent\sc Example}
\def\address#1#2{\begingroup
\noindent\parbox[t]{7.8cm}{%
\small{\scshape\ignorespaces#1}\par\vskip1ex
\noindent\small{\itshape E-mail address}%
\/: #2\par\vskip4ex}\hfill%
\endgroup}%
\title{On the Diophantine equation $pq=x^2+ny^2$} 
\author{
\textsc{Ja Kyung Koo and Dong Hwa Shin$^*$} 
}
\date{} 
\begin{document}

\maketitle

\footnote{ 
2010 \textit{Mathematics Subject Classification}. Primary 11E25; Secondary 11E16.}
\footnote{ 
\textit{Key words and phrases}. Binary quadratic forms, representations.} \footnote{
\thanks{
The first named author was partially supported by the NRF of Korea
grant funded by MISP (2013042157). $^*$The corresponding author was
supported by Hankuk University of Foreign Studies Research Fund of
2014.} }

\begin{abstract}
Let $n$ be a positive integer.
We investigate pairs of distinct odd primes $p$ and $q$
not dividing $n$ for which
the Diophantine equations $pq=x^2+ny^2$ have integer solutions in $x$ and $y$.
As its examples we classify all such pairs of $p$ and $q$ when $n=5$ and $14$.
 \end{abstract}

\maketitle

\section {Introduction}\label{intro}

In a letter to Blaise Pascal dated September 25 in 1654
Fermat announced that for primes $p$ and $q$
\begin{equation}\label{Fermat}
p,q\equiv3,7\pmod{20}~\Longrightarrow~pq=x^2+5y^2~\textrm{for some}~x,y\in\mathbb{Z}.
\end{equation}
And, Lagrange \cite{Lagrange} first noted that
\begin{equation*}
p\equiv3,7\pmod{20}~\Longrightarrow~p=2x^2+2xy+3y^2~\textrm{for some}~x,y\in\mathbb{Z}.
\end{equation*}
He then presented the identity
\begin{equation}\label{Lag}
(2x^2+2xy+3y^2)(2z^2+2zw+3w^2)
=(2xz+xw+yz+3yw)^2+5(xw-yz)^2,
\end{equation}
which completes the proof. 
\par
On the other hand, what can we say about the other direction of (\ref{Fermat})?
As far as we understand, not much has been known in this theme so far.
More generally, let $n$ be a positive integer, and let $p$ and $q$ be distinct odd primes not dividing $n$. In this paper we shall find an equivalent condition under which the product $pq$ can be represented by the principal form $x^2+ny^2$ (Theorem \ref{main}). To this end,
we shall review some classical facts about
form class groups and ideal class groups ($\S$2--3),
from which we are able to give a satisfactory answer to the question above and
to the solvability of the Diophantine equation $pq=x^2+14y^2$ (Examples \ref{example1} and \ref{example2}) as well.

\section {Binary quadratic forms}\label{section2}

We first introduce a general version of the identity (\ref{Lag}).

\begin{lemma}\label{extendLag}
Let $a,b,c\in\mathbb{Z}$ with $n=-b^2+ac$.
We have the identity
\begin{equation*}
(ax^2+2bxy+cy^2)(az^2+2bzw+cw^2)=(axz+bxw+byz+cyw)^2
+n(xw-yz)^2.
\end{equation*}
\end{lemma}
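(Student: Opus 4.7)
The claim is a polynomial identity in the indeterminates $x,y,z,w$ whose coefficients depend only on $a,b,c$ and on the parameter $n=ac-b^2$, so the cleanest plan is a direct verification. I would first multiply out the left-hand side $(ax^2+2bxy+cy^2)(az^2+2bzw+cw^2)$, collecting terms by their bidegree into the nine monomials $x^2z^2,\, x^2zw,\, x^2w^2,\, xyz^2,\, xyzw,\, xyw^2,\, y^2z^2,\, y^2zw,\, y^2w^2$. Then I would expand $(axz+bxw+byz+cyw)^2$ and add $n(xw-yz)^2$. Most monomial coefficients already agree on the nose; the only three that force one to invoke the hypothesis $n=ac-b^2$ are $x^2w^2$ and $y^2z^2$ (each appearing with coefficient $b^2+n$ on the right versus $ac$ on the left) and $xyzw$ (coefficient $2ac+2b^2-2n$ on the right versus $4b^2$ on the left). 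All three collapse after the single substitution $b^2+n=ac$.

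Conceptually, and as motivation, the identity is the multiplicativity of the norm form for $\mathbb{Z}[\sqrt{-n}]$. Setting
\[
\alpha=(ax+by)+y\sqrt{-n},\qquad \beta=(az+bw)+w\sqrt{-n},
\]
one computes $N(\alpha)=a(ax^2+2bxy+cy^2)$ and $N(\beta)=a(az^2+2bzw+cw^2)$ using $b^2+n=ac$, while
\[
\alpha\overline{\beta}=a\bigl[(axz+bxw+byz+cyw)-(xw-yz)\sqrt{-n}\bigr].
\]
The relation $N(\alpha)N(\beta)=N(\alpha\overline{\beta})$, after dividing through by $a^2$, recovers the identity. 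The degenerate case $a=0$ (which forces $n=-b^2$) can either be handled separately by a short factoring argument, or simply absorbed by observing that the identity is polynomial in $a,b,c$ subject to the single relation $n=ac-b^2$ and therefore holds for all values of $a$ once it holds generically.

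I do not anticipate any real obstacle: the assertion is an algebraic identity and the only risk is bookkeeping among the nine cross-terms. If I take the norm-form route, the one point that requires a little care is computing the real and imaginary parts of $\alpha\overline{\beta}$ correctly and invoking $b^2+n=ac$ at the right moment in the real part.
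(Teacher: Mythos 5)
Your proof is correct: the direct expansion, with the hypothesis $n=ac-b^2$ invoked exactly at the $x^2w^2$, $y^2z^2$, and $xyzw$ coefficients, checks out, and this is precisely the verification the paper has in mind when it declares the identity ``Immediate.'' The norm-form interpretation is a nice bonus but not needed.
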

\begin{proof}
Immediate.
\end{proof}

Let $f(x,y)=ax^2+bxy+cy^2$ be an integral binary quadratic form (for short, a form).
It is said to be \textit{primitive} if its coefficients $a$, $b$ and $c$ are relatively prime.
Let $m$ be an integer. We say that $m$ is \textit{represented} by $f(x,y)$ if the equation
$m=f(x,y)$ has an integer solution in $x$ and $y$. Moreover, if such $x$ and $y$ are relatively prime, then we say that $m$ is \textit{properly represented} by $f(x,y)$.
\par
Let $f(x,y)$ and $g(x,y)$ be primitive positive definite forms of
the same discriminant. We say that they are \textit{equivalent} (respectively, \textit{properly equivalent})
if there is an element $\left[\begin{matrix}p&q\\r&s\end{matrix}\right]$ of $\mathrm{GL}_2(\mathbb{Z})$ (respectively, $\mathrm{SL}_2(\mathbb{Z})$) such that
\begin{equation*}
f(x,y)=g(px+qy,rx+sy).
\end{equation*}
The equivalence and properly equivalence of forms are indeed equivalence relations. Furthermore, two equivalent forms represent the same numbers \cite[pp.24--25]{Cox}.
\par
For a negative integer $D$ such that $D\equiv0,1\pmod{4}$ we let $C(D)$ be the set of all
properly equivalence classes of primitive positive definite forms of discriminant $D$.
As is well-known the Dirichlet composition makes $C(D)$ into a finite abelian group,
called the \textit{form class group} of discriminant $D$ \cite[Theorem 3.9]{Cox}.
We denote by $h(D)$ the order of the group $C(D)$.
\par
We call a primitive positive definite form $ax^2+bxy+cy^2$
a \textit{reduced form} if the coefficients $a$, $b$ and $c$ satisfy
\begin{eqnarray*}
|b|\leq a\leq c,~\textrm{and}~b\geq0~\textrm{if either}~|b|=0~\textrm{or}~a=c.
\end{eqnarray*}
Then, every primitive positive definite form of discriminant $D$ is properly equivalent to
a unique reduced form of the same discriminant \cite[Theorem 2.8]{Cox}.
Hence $h(D)$ is equal to the number of reduced forms of discriminant $D$.

\begin{proposition}\label{Legendre}
Let $n$ be a positive integer and $p$ be an odd prime not dividing $n$.
Then we get the assertion
\begin{equation*}
\bigg(\frac{-n}{p}\bigg)=1~\Longleftrightarrow~
\textrm{$p$ is represented by a reduced form of discriminant $-4n$}.
\end{equation*}
\end{proposition}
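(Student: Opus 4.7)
The plan is to prove the two implications separately: construct an explicit form in one direction, and extract a square root of $-n$ modulo $p$ in the other.

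For $(\Rightarrow)$, I would pick $b\in\mathbb{Z}$ with $b^2\equiv -n\pmod{p}$, set $c:=(b^2+n)/p\in\mathbb{Z}$, and examine the form $f(x,y)=px^2+2bxy+cy^2$. Its discriminant is $(2b)^2-4pc=-4n$, and with leading coefficient $p>0$ and negative discriminant $f$ is positive definite. Primitivity is a short check: any prime $\ell$ dividing $p$, $2b$, $c$ must divide $p$, so $\ell=p$; but then $p\mid b$ and $p\mid c$ imply $p^2\mid b^2$ and $p^2\mid pc=b^2+n$, whence $p\mid n$, contradicting the hypothesis. Since $f$ represents $p$ at $(x,y)=(1,0)$ and every primitive positive definite form is properly equivalent to a unique reduced form representing the same integers (as recorded in Section~\ref{section2}), $p$ is represented by a reduced form of discriminant $-4n$.

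For $(\Leftarrow)$, suppose $p=ax_{0}^{2}+bx_{0}y_{0}+cy_{0}^{2}$ for some reduced form $ax^2+bxy+cy^2$ of discriminant $-4n$. Because $p$ is prime, any common divisor $d=\gcd(x_{0},y_{0})$ satisfies $d^{2}\mid p$, forcing $d=1$, so $p$ is \emph{properly} represented. I would then invoke the classical fact that if a primitive form properly represents an integer $m$, then it is properly equivalent to a form with leading coefficient $m$; this produces a form $pX^2+BXY+CY^2$ of the same discriminant, so $B^{2}-4pC=-4n$. Since $-4n\equiv 0\pmod{4}$, $B$ is even, say $B=2B'$, and reducing modulo $p$ gives $B'^{2}\equiv -n\pmod{p}$, whence $\bigl(\tfrac{-n}{p}\bigr)=1$.

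The only nontrivial ingredient is the lemma used in the reverse direction: a primitive form properly representing $m$ is properly equivalent to one with leading coefficient $m$. This will be the main obstacle in the sense that it is the one non-computational step; it rests on the standard observation that a coprime pair $(x_{0},y_{0})$ completes to a matrix in $\mathrm{SL}_{2}(\mathbb{Z})$, which provides the needed change of variables. Everything else reduces to routine discriminant bookkeeping together with the reduction theory already stated.
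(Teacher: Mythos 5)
Your proof is correct; the paper itself only cites \cite[Corollary 2.6]{Cox} here, and your argument is precisely the standard one behind that citation (completing a square root of $-n$ mod $p$ to a form of discriminant $-4n$ representing $p$, and conversely normalizing a proper representation to leading coefficient $p$). No gaps: the primitivity check, the properness of the representation, and the parity of the middle coefficient are all handled correctly.
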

\begin{proof}
See \cite[Corollary 2.6]{Cox}.
\end{proof}

Let $n$ be a positive integer. Euler called $n$ a \textit{convenient number} if it satisfies the following properties:
\begin{quote}
Let $m$ be an odd positive integer relatively prime to $n$
which is properly represented by $x^2+ny^2$.
If the equation $m=x^2+ny^2$ has only one solution with $x,y\geq0$, then $m$ is a prime.
\end{quote}
We say that two primitive positive definite forms of discriminant $D$ ($<0$)
are in the same \textit{genus} if they represent the same values in $(\mathbb{Z}/D\mathbb{Z})^\times$.
Then it is well-known that $n$ is a convenient number if and only if every genus
of discriminant $-4n$ consists of a single properly equivalence class
\cite[Proposition 3.24]{Cox}
(see also \cite{Kani}, \cite{Weil} and \cite{Weinberger}).

\section {Ideal class groups}\label{section3}

Let $K$ be an imaginary quadratic field of discriminant $d_K$
and $\mathcal{O}_K$ be its ring of integers.
Let $\mathcal{O}$ be the order conductor $f$ ($>0$) in $K$, so
its discriminant $D$ satisfies the relation $D=f^2d_K$.
We denote by
$I(\mathcal{O})$ the group of all proper fractional $\mathcal{O}$-ideals (under multiplication),
and by $P(\mathcal{O})$ its subgroup of all principal $\mathcal{O}$-ideals.
We call
the associated quotient group $C(\mathcal{O})=I(\mathcal{O})/P(\mathcal{O})$
the \textit{ideal class group} of the order $\mathcal{O}$.
Then we will relate this ideal class group $C(\mathcal{O})$ with the form class group
$C(D)$ defined in $\S$\ref{section2} via the following isomorphism
\begin{equation}\label{C(D)C(O)}
\begin{array}{ccc}C(D)&\stackrel{\sim}{\rightarrow}&C(\mathcal{O})\\
\textrm{class of}~ax^2+bxy+cy^2&\mapsto&
\textrm{class of}~[a,(-b+\sqrt{D})/2]
\end{array}
\end{equation}
\cite[Theorem 7.7(ii)]{Cox}.

\begin{lemma}\label{norm}
Let $f(x,y)$ be a reduced form of discriminant $D$ and $m$ be a positive integer. Then,
\begin{eqnarray*}
\textrm{$m$ is represented by $f(x,y)$}
&\Longleftrightarrow&m=N_\mathcal{O}(\mathfrak{a})~(=|\mathcal{O}/\mathfrak{a}|)~\textrm{for some
$\mathcal{O}$-ideal $\mathfrak{a}$}\\
&&\textrm{in the corresponding ideal class in $C(\mathcal{O})$}.
\end{eqnarray*}
\end{lemma}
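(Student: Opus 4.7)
My plan is to use the isomorphism~(\ref{C(D)C(O)}) to translate representability of $m$ by $f(x,y)=ax^2+bxy+cy^2$ into the realization of $m$ as an ideal norm. Let $\mathfrak{a}_0=[a,(-b+\sqrt{D})/2]$, the distinguished representative whose class in $C(\mathcal{O})$ corresponds to $[f]$ under~(\ref{C(D)C(O)}); a direct check shows $N_\mathcal{O}(\mathfrak{a}_0)=a$. The key computational input is the identity
\[
N\!\left(xa+y\cdot\frac{-b+\sqrt{D}}{2}\right)=a\bigl(ax^{2}-bxy+cy^{2}\bigr)\qquad(x,y\in\mathbb{Z}),
\]
which follows at once from $D=b^{2}-4ac$. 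Since the twisted form $ax^{2}-bxy+cy^{2}$ represents the same integers as $f$ via $y\mapsto -y$, I may pass freely between the two.

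For the forward direction, assume $m=f(x_{0},y_{0})$. Set $\alpha_{0}=x_{0}a-y_{0}(-b+\sqrt{D})/2\in\mathfrak{a}_{0}$; by the identity, $N(\alpha_{0})=am$. The inclusion $\alpha_{0}\mathcal{O}\subseteq\mathfrak{a}_{0}$ together with invertibility of proper ideals gives a factorization $\alpha_{0}\mathcal{O}=\mathfrak{a}_{0}\mathfrak{c}$ with $\mathfrak{c}$ a proper integral $\mathcal{O}$-ideal, and multiplicativity of the norm forces $N_\mathcal{O}(\mathfrak{c})=m$ while $[\mathfrak{c}]=[\mathfrak{a}_{0}]^{-1}$. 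Replacing $\mathfrak{c}$ by its complex conjugate $\bar{\mathfrak{c}}$, whose class equals $[\mathfrak{c}]^{-1}=[\mathfrak{a}_{0}]$ and whose norm remains $m$, yields an ideal in the required class.

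For the converse, suppose $\mathfrak{a}=\mu\mathfrak{a}_{0}$ for some $\mu\in K^{\times}$ with $N_\mathcal{O}(\mathfrak{a})=m$, so that $N(\mu)\cdot a=m$. The identity $\mathfrak{a}\bar{\mathfrak{a}}=m\mathcal{O}$ forces $m\in\mathfrak{a}$, so $m=\mu\bigl(x_{0}a+y_{0}(-b+\sqrt{D})/2\bigr)$ for suitable $x_{0},y_{0}\in\mathbb{Z}$. Taking norms on both sides,
\[
m^{2}=N(\mu)\cdot a\cdot(ax_{0}^{2}-bx_{0}y_{0}+cy_{0}^{2})=m\cdot(ax_{0}^{2}-bx_{0}y_{0}+cy_{0}^{2}),
\]
whence $ax_{0}^{2}-bx_{0}y_{0}+cy_{0}^{2}=m$ and hence $m=f(x_{0},-y_{0})$.

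The principal obstacle is sign bookkeeping: the norm identity produces $ax^{2}-bxy+cy^{2}$ rather than $f$ itself, so each direction must absorb this discrepancy via $y\mapsto-y$. Otherwise the argument reduces to two standard facts that I will invoke without further comment: multiplicativity of the ideal norm, and the relation $\mathfrak{a}\bar{\mathfrak{a}}=N_\mathcal{O}(\mathfrak{a})\,\mathcal{O}$ for proper ideals in the imaginary quadratic order $\mathcal{O}$.
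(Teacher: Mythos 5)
Your argument is correct: the norm identity $N\bigl(xa+y(-b+\sqrt{D})/2\bigr)=a(ax^2-bxy+cy^2)$, the factorization $\alpha_0\mathcal{O}=\mathfrak{a}_0\mathfrak{c}$ via invertibility of proper ideals, and the passage to $\bar{\mathfrak{c}}$ (using $\mathfrak{c}\bar{\mathfrak{c}}=N_\mathcal{O}(\mathfrak{c})\mathcal{O}$) all check out, as does the converse via $m\in\mathfrak{a}$ and taking norms. The paper proves Lemma \ref{norm} only by citing \cite[Theorem 7.7(iii)]{Cox}, and your proof is essentially the standard argument given there, so there is nothing to add beyond confirming its correctness.
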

\begin{proof}
See \cite[Theorem 7.7(iii)]{Cox}.
\end{proof}

Furthermore, let $I(\mathcal{O},f)$ be the subgroup of
$I(\mathcal{O})$ consisting of all proper factional $\mathcal{O}$-ideals prime to $f$, and
let $I_K(f)$ be the group of all fractional ideals of $K$ prime to $f$.
Then we have an isomorphism
\begin{equation}\label{O_KO}
\begin{array}{ccc}
\phi_f~:~I_K(f)&\stackrel{\sim}{\rightarrow}&I(\mathcal{O},f)\\
\mathfrak{a}&\mapsto&\mathfrak{a}\cap\mathcal{O}
\end{array}
\end{equation}
which preserves norms, namely $N_{\mathcal{O}_K}(\mathfrak{a})=N_\mathcal{O}(\mathfrak{a}\cap\mathcal{O})$.
Its inverse map $\phi_f^{-1}$ is given by
\begin{equation*}
\phi_f^{-1}(\mathfrak{b})=\mathfrak{b}\mathcal{O}_K\quad(\mathfrak{b}\in I(\mathcal{O},f))
\end{equation*}
\cite[Proposition 7.20]{Cox}.
\par
On the other hand, we know by the existence theorem
of class field theory that
there
exists a unique abelian extension $H_\mathcal{O}$ of $K$ whose
Galois group satisfies
\begin{equation*}
\mathrm{Gal}(H_\mathcal{O}/K)\simeq C(\mathcal{O})
\end{equation*}
(\cite[Chapters IV and V]{Janusz} and
\cite[Proposition 7.22]{Cox}).
We call this extension $H_\mathcal{O}$ the \textit{ring class field} of the order $\mathcal{O}$.

\begin{proposition}\label{principalform}
Let $n$ be a positive integer
and $H_\mathcal{O}$
be the ring class field of the order $\mathcal{O}=[1,\sqrt{-n}]$
in the imaginary quadratic field $K=\mathbb{Q}(\sqrt{-n})$.
Let $f_n(x)$ be the minimal polynomial of a real algebraic integer
which generates $H_\mathcal{O}$ over $K$.
If an odd prime $p$ divides neither $n$ nor the discriminant of $f_n(x)$, then
\begin{eqnarray*}
p=x^2+ny^2~\textrm{for some}~x,y\in\mathbb{Z}&\Longleftrightarrow&
\bigg(\frac{-n}{p}\bigg)=1~\textrm{and}~f_n(x)\equiv0\pmod{p}\\
&&\textrm{has an integer solution.}
\end{eqnarray*}
\end{proposition}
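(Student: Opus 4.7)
The plan is to show that both sides of the stated biconditional are equivalent to the single assertion that $p$ splits completely in $H_\mathcal{O}/\mathbb{Q}$. Let $f\in\{1,2\}$ denote the conductor of $\mathcal{O}=[1,\sqrt{-n}]$ in $\mathcal{O}_K$; the hypotheses on $p$ (odd, $p\nmid n$) imply that $p$ is coprime to $f$ and unramified in $K/\mathbb{Q}$, and hence also unramified in $H_\mathcal{O}/\mathbb{Q}$.

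Step 1 is to establish
\[
p=x^2+ny^2~\Longleftrightarrow~p~\textrm{splits completely in}~H_\mathcal{O}/\mathbb{Q}.
\]
By Lemma \ref{norm} together with the isomorphism (\ref{C(D)C(O)}), the left-hand side is equivalent to the existence of a principal $\mathcal{O}$-ideal of norm $p$. The norm-preserving map $\phi_f$ of (\ref{O_KO}) sets up a bijection between $\mathcal{O}$-ideals of norm $p$ and $\mathcal{O}_K$-ideals of norm $p$, so such ideals exist precisely when $p$ splits in $K$ as $p\mathcal{O}_K=\mathfrak{p}\bar{\mathfrak{p}}$; the only candidates are $\mathfrak{p}\cap\mathcal{O}$ and $\bar{\mathfrak{p}}\cap\mathcal{O}$. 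By the Artin isomorphism $C(\mathcal{O})\cong\operatorname{Gal}(H_\mathcal{O}/K)$ built into the definition of the ring class field, $\mathfrak{p}\cap\mathcal{O}$ is principal iff $\mathfrak{p}$ splits completely in $H_\mathcal{O}/K$. Since complex conjugation interchanges $\mathfrak{p}$ and $\bar{\mathfrak{p}}$, this is equivalent to $p$ splitting completely in $H_\mathcal{O}/\mathbb{Q}$.

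Step 2 is the heart of the argument:
\[
p~\textrm{splits completely in}~H_\mathcal{O}/\mathbb{Q}~\Longleftrightarrow~\bigl(\tfrac{-n}{p}\bigr)=1~\textrm{and}~f_n(x)\equiv 0\pmod p~\textrm{has an integer solution}.
\]
I would invoke the standard fact that $H_\mathcal{O}/\mathbb{Q}$ is Galois with
\[
G:=\operatorname{Gal}(H_\mathcal{O}/\mathbb{Q})\cong H\rtimes\langle\tau\rangle,\qquad H:=\operatorname{Gal}(H_\mathcal{O}/K),
\]
complex conjugation $\tau$ acting on $H\cong C(\mathcal{O})$ by inversion (because $[\bar{\mathfrak{a}}]=[\mathfrak{a}]^{-1}$ in $C(\mathcal{O})$). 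Since $\alpha$ is real, $L:=\mathbb{Q}(\alpha)$ is the fixed field of $\langle\tau\rangle$ inside $H_\mathcal{O}$ and $H_\mathcal{O}=KL$. By the Kummer--Dedekind theorem applied to $L$ (valid since $p\nmid\operatorname{disc}(f_n)$), together with the standard decomposition-group computation, $f_n$ has a root modulo $p$ iff the Frobenius conjugacy class $[\operatorname{Frob}_p]\subset G$ meets $\langle\tau\rangle$. Meanwhile $(-n/p)=1$ iff $\operatorname{Frob}_p\in H$; since $H\trianglelefteq G$, this places the entire class inside $H$. As $H\cap\langle\tau\rangle=\{1\}$, the two conditions together force $\operatorname{Frob}_p$ to be conjugate to (hence equal to) the identity, which is exactly complete splitting.

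The main obstacle lies in the second step: a single root of $f_n$ modulo $p$ yields via Dedekind only one degree-one prime of the generically \emph{non-Galois} field $L$, and so cannot by itself imply complete splitting in $H_\mathcal{O}$. The upgrade succeeds solely because of the generalized dihedral shape of $G$---the fact that $\tau$ conjugates elements of $H$ to their inverses forces any element of $H$ that is $G$-conjugate to an element of $\langle\tau\rangle$ to be trivial. Once this group-theoretic input is secured, combining Steps 1 and 2 yields the proposition.
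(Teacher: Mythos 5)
Your proof is correct and is essentially the argument the paper relies on: the paper's ``proof'' is only a citation of Cox, Theorem 9.2, and your two steps (representability by the principal form $\Leftrightarrow$ complete splitting of $p$ in $H_\mathcal{O}$, followed by the Frobenius/Kummer--Dedekind analysis in which $H\trianglelefteq G$ and $H\cap\langle\tau\rangle=\{1\}$ upgrade a single root of $f_n$ modulo $p$ to complete splitting) reconstruct exactly that argument, the normality of $H$ being the real engine rather than the inversion action per se. The only slip is cosmetic: the conductor of $[1,\sqrt{-n}]$ need not lie in $\{1,2\}$ when $n$ is not squarefree (e.g.\ $n=9$ gives $f=3$), but since $f^2\mid 4n$, an odd prime $p\nmid n$ is still prime to $f$ and unramified in $K$, so nothing downstream is affected.
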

\begin{proof}
See \cite[Theorem 9.2]{Cox}.
\end{proof}

\begin{remark}
By the main theorem of complex multiplication, the singular value $j(\mathcal{O})$
of the elliptic modular function $j$ generates $H_\mathcal{O}$ over $K$ as
a real algebraic integer \cite[Chapter 5, Theorem 4 and Chapter 10, Theorem 5]{Lang}.
However, its minimal polynomial has too large integer coefficients for practical use (such as Proposition \ref{principalform}).
On the other hand, one can refer to \cite{Gee}, \cite{K-S-Y} and \cite{Schertz}
for other smaller generators of $H_\mathcal{O}$
in terms of the singular values of
Weber functions and eta-quotients.
\end{remark}

\section {Products of two primes in the form $x^2+ny^2$}

We are ready to prove our main theorem concerning the
Diophantine equation $pq=x^2+ny^2$.

\begin{theorem}\label{main}
Let $n$ be a positive integer, and let $p$ and $q$ be
distinct odd primes not dividing $n$. Then,
\begin{eqnarray*}
pq=x^2+ny^2~\textrm{for some}~x,y\in\mathbb{Z}&\Longleftrightarrow&
\textrm{there is a reduced form of discriminant $-4n$}\\
 &&\textrm{representing both $p$ and $q$}.
\end{eqnarray*}
\end{theorem}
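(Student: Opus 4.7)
The direction $(\Leftarrow)$ is essentially Lemma~\ref{extendLag}. Any reduced form of discriminant $-4n$ has even middle coefficient (since $-4n \equiv 0 \pmod{4}$), so it may be written as $ax^2 + 2bxy + cy^2$ with $n = ac - b^2$; substituting representations of $p$ and $q$ into the identity of Lemma~\ref{extendLag} produces $pq = X^2 + nY^2$ for explicit $X, Y \in \mathbb{Z}$.

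For $(\Rightarrow)$, suppose $pq = x^2 + ny^2$. First, $p \nmid y$ (otherwise $p \mid x$ and $p^2 \mid pq$), so $-n \equiv (xy^{-1})^2 \pmod{p}$ and hence $\left(\frac{-n}{p}\right) = 1$; symmetrically $\left(\frac{-n}{q}\right) = 1$. By Proposition~\ref{Legendre} there exist reduced forms $f_p$ and $f_q$ of discriminant $-4n$ representing $p$ and $q$ respectively. I will show $[f_p] \cdot [f_q] = [1]$ in $C(-4n)$. Granting this, write $f_p = ax^2 + bxy + cy^2$: the inverse class is represented by $ax^2 - bxy + cy^2$, which is $\mathrm{GL}_2(\mathbb{Z})$-equivalent to $f_p$ via $(x,y) \mapsto (x,-y)$. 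Hence $f_q$, being properly equivalent to the inverse form, is $\mathrm{GL}_2(\mathbb{Z})$-equivalent to $f_p$ and represents the same integers, so the reduced form $f_p$ represents both $p$ and $q$.

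To establish the class-group relation, pass to ideals. Both $p$ and $q$ split in $K = \mathbb{Q}(\sqrt{-n})$: write $p\mathcal{O}_K = \mathfrak{p}\bar{\mathfrak{p}}$ and $q\mathcal{O}_K = \mathfrak{q}\bar{\mathfrak{q}}$. Counting norms forces $(x + y\sqrt{-n})\mathcal{O}_K = \mathfrak{p}\mathfrak{q}$, after possibly exchanging $\mathfrak{q}$ and $\bar{\mathfrak{q}}$. Let $f$ denote the conductor of $\mathcal{O} = [1, \sqrt{-n}]$ in $\mathcal{O}_K$; since $p, q$ are odd and coprime to $n$, they are coprime to $f$, so $\mathfrak{p}$, $\mathfrak{q}$ and $(x + y\sqrt{-n})\mathcal{O}_K$ all lie in $I_K(f)$. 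Applying $\phi_f$ from (\ref{O_KO}) gives proper $\mathcal{O}$-ideals $\mathfrak{p}' = \mathfrak{p} \cap \mathcal{O}$ and $\mathfrak{q}' = \mathfrak{q} \cap \mathcal{O}$ of norms $p$ and $q$, whose product $\mathfrak{p}'\mathfrak{q}' = \phi_f(\mathfrak{p}\mathfrak{q}) = (x + y\sqrt{-n})\mathcal{O}$ is principal. Therefore $[\mathfrak{p}'][\mathfrak{q}'] = [1]$ in $C(\mathcal{O})$, and transferring through (\ref{C(D)C(O)}) together with Lemma~\ref{norm} gives $[f_p][f_q] = [1]$ in $C(-4n)$, as claimed.

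The main technical point is verifying the identification $\phi_f(\mathfrak{p}\mathfrak{q}) = (x + y\sqrt{-n})\mathcal{O}$: a priori the intersection $(x + y\sqrt{-n})\mathcal{O}_K \cap \mathcal{O}$ could be strictly larger than the naively defined principal ideal. Both candidates are, however, proper $\mathcal{O}$-ideals prime to $f$ whose extensions to $\mathcal{O}_K$ coincide with $(x + y\sqrt{-n})\mathcal{O}_K$, and the injectivity of $\phi_f$ on $I(\mathcal{O}, f)$---which rests on the coprimality $\gcd(pq, f) = 1$---rules out strict inclusion. Once this point is settled the remaining steps are bookkeeping.
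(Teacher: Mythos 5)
Your proof is correct and follows essentially the same route as the paper: Lemma~\ref{extendLag} for ($\Leftarrow$), and for ($\Rightarrow$) the splitting of $p$ and $q$ in $\mathbb{Q}(\sqrt{-n})$, the relation $\mathfrak{p}\mathfrak{q}=(x+y\sqrt{-n})\mathcal{O}_K$, and the transfer to $C(\mathcal{O})\simeq C(-4n)$ via $\phi_f$ and Lemma~\ref{norm} --- the paper merely phrases the conclusion as $[\mathfrak{q}\cap\mathcal{O}]=[\overline{\mathfrak{p}}\cap\mathcal{O}]$ instead of your $[f_p][f_q]=[1]$ combined with the opposite-form observation. One small presentational caveat: the forms for which $[f_p][f_q]=[1]$ holds must be the reduced forms attached to the ideal classes $[\mathfrak{p}\cap\mathcal{O}]$ and $[\mathfrak{q}\cap\mathcal{O}]$ (which do represent $p$ and $q$ by Lemma~\ref{norm}), not arbitrary reduced forms supplied by Proposition~\ref{Legendre}, since a prime can be represented by two distinct reduced forms lying in mutually inverse classes.
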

\begin{proof}
First, assume that there exists a reduced form
$f(x,y)
=ax^2+bx+c$ of discriminant $-4n$ representing
both $p$ and $q$. Then we have
\begin{equation*}
p=f(x_1,y_1)~\textrm{and}~q=f(x_2,y_2)~\textrm{for some}~x_1,y_1,x_2,y_2\in\mathbb{Z}.
\end{equation*}
It follows from $b^2-4ac=-4n$ that $b$ is an even integer. So we achieve
by Lemma \ref{extendLag}
\begin{equation*}
pq=f(x_1,y_1)f(x_2,y_2)=
(ax_1x_2+(b/2)x_1y_2+(b/2)y_1x_2+cy_1y_2)^2
+n(x_1y_2-y_1x_2)^2.
\end{equation*}
This yields that $pq$ can be written in the form $x^2+ny^2$ for some $x,y\in\mathbb{Z}$.
\par
Conversely, assume that
\begin{equation}\label{assumption}
pq=x^2+ny^2~\textrm{for some}~x,y\in\mathbb{Z}.
\end{equation}
Since $p$ and $q$ are distinct odd primes not dividing $n$ by hypothesis, we deduce from (\ref{assumption})
\begin{equation*}
\bigg(\frac{-n}{p}\bigg)=
\bigg(\frac{-n}{q}\bigg)=1.
\end{equation*}
This implies that both $p$ and $q$ split in the imaginary quadratic field
$K=\mathbb{Q}(\sqrt{-n})$. Let
\begin{equation}\label{pq}
p\mathcal{O}_K=\mathfrak{p}\overline{\mathfrak{p}}~\textrm{and}~
q\mathcal{O}_K=\mathfrak{q}\overline{\mathfrak{q}}
\end{equation}
be the prime ideal factorizations of $p\mathcal{O}_K$ and $q\mathcal{O}_K$, respectively. We then
derive that
\begin{eqnarray*}
(pq)\mathcal{O}_K
&=&(x^2+ny^2)\mathcal{O}_K\quad\textrm{by (\ref{assumption})}\\
&=&(x+\sqrt{-n}y)\mathcal{O}_K\overline{(x+\sqrt{-n}y)\mathcal{O}_K}\\
&=&(p\mathcal{O}_K)(q\mathcal{O}_K)\\
&=&(\mathfrak{p}\overline{\mathfrak{p}})(\mathfrak{q}\overline{\mathfrak{q}})\quad\textrm{by (\ref{pq})}\\
&=&(\mathfrak{p}\mathfrak{q})\overline{(\mathfrak{p}\mathfrak{q})}\quad(=
(\mathfrak{p}\overline{\mathfrak{q}})
\overline{(\mathfrak{p}\overline{\mathfrak{q}})}).
\end{eqnarray*}
Thus without loss of generality we may assume
by the uniqueness of prime ideal factorization in $\mathcal{O}_K$
that
\begin{equation}\label{pqprincipal}
\mathfrak{p}\mathfrak{q}=(x+\sqrt{-n}y)\mathcal{O}_K.
\end{equation}
\par
On the other hand, let $\mathcal{O}=[1,\sqrt{-n}]$, which is the order in $K$
of discriminant $D=f^2d_K=-4n$.
Here, $d_K$ is the discriminant of $K$ and $f$ is the conductor of $\mathcal{O}$.
Let $f(x,y)$ be the reduced form of discriminant $D$ corresponding to the ideal class
$C=[\overline{\mathfrak{p}}\cap\mathcal{O}]$ in $\mathrm{C}(\mathcal{O})$
via the isomorphism in (\ref{C(D)C(O)}).
Then the norm
$N_\mathcal{O}(\overline{\mathfrak{p}}\cap\mathcal{O})=N_{\mathcal{O}_K}(\overline{\mathfrak{p}})=p$
can be represented by $f(x,y)$ by Lemma \ref{norm}. Furthermore,
we attain that in $C(\mathcal{O})=I(\mathcal{O})/P(\mathcal{O})$,
\begin{eqnarray*}
[\mathfrak{q}\cap\mathcal{O}]&=&[p\mathcal{O}][\mathfrak{q}\cap\mathcal{O}]
\quad\textrm{since}~p\mathcal{O}\in P(\mathcal{O})\\
&=&[p\mathcal{O}_K\cap\mathcal{O}][\mathfrak{q}\cap\mathcal{O}]\\
&&\textrm{by considering the isomorphism $\phi_f$ in (\ref{O_KO}) and its inverse map $\phi_f^{-1}$}\\
&=&[(p\mathcal{O}_K)\mathfrak{q}\cap\mathcal{O}]\quad\textrm{by the homomorphism property of $\phi_f$}\\
&=&[\mathfrak{p}\overline{\mathfrak{p}}\mathfrak{q}\cap\mathcal{O}]\quad\textrm{by (\ref{pq})}\\
&=&[\mathfrak{p}\mathfrak{q}\cap\mathcal{O}][\overline{\mathfrak{p}}\cap\mathcal{O}]
\quad\textrm{again by the homomorphism property of $\phi_f$}\\
&=&[(x+\sqrt{-n}y)\mathcal{O}_K\cap\mathcal{O}][\overline{\mathfrak{p}}\cap\mathcal{O}]\quad
\textrm{by (\ref{pqprincipal})}\\
&=&[(x+\sqrt{-n}y)\mathcal{O}][\overline{\mathfrak{p}}\cap\mathcal{O}]\quad
\textrm{because}~x+\sqrt{-n}y\in\mathcal{O}\\
&=&[\overline{\mathfrak{p}}\cap\mathcal{O}]\quad\textrm{since}~(x+\sqrt{-n}y)\mathcal{O}\in P(\mathcal{O})\\
&=&C.
\end{eqnarray*}
Therefore the norm $N_\mathcal{O}(\mathfrak{q}\cap\mathcal{O})
=N_{\mathcal{O}_K}(\mathfrak{q})=q$ is also represented by $f(x,y)$ by Lemma \ref{norm}. This completes the proof of the theorem.
\end{proof}

\section {Examples}

In this section we shall present a couple of concrete examples of Theorem \ref{main}.

\begin{example}\label{example1}
First we consider the question raised in $\S$\ref{intro} about
the Diophantine equation $pq=x^2+5y^2$.
Let $n=5$. There are
two reduced forms of discriminant $-4n=-20$, namely
\begin{equation*}
x^2+5y^2~\textrm{and}~2x^2+2xy+3y^2.
\end{equation*}
One can then readily check that
\begin{equation*}
\left\{\begin{array}{cccl}
x^2+5y^2&\textrm{represents}&1,9&\textrm{in}~(\mathbb{Z}/20\mathbb{Z})^\times,\\
2x^2+2xy+3y^2&\textrm{represents}&3,7&\textrm{in}~(\mathbb{Z}/20\mathbb{Z})^\times
\end{array}\right.
\end{equation*}
\cite[(2.20)]{Cox}.
This shows that there are two genera of discriminant $-20$
each consisting of a single class,
and hence $n=5$ is a convenient number.
\par
On the other hand, we obtain by Proposition \ref{Legendre} that for an odd prime $p$ other than $5$
\begin{eqnarray*}
p=x^2+5y^2~\textrm{or}~2x^2+2xy+3y^2~\textrm{for some}~x,y\in\mathbb{Z}
&\Longleftrightarrow&\bigg(\frac{-5}{p}\bigg)=1\\
&\Longleftrightarrow&p\equiv1,3,7,9\pmod{20}.
\end{eqnarray*}
Therefore we conclude by Theorem \ref{main} that for distinct odd primes $p$ and $q$ other than $5$
\begin{eqnarray*}
pq=x^2+5y^2~\textrm{for some}~x,y\in\mathbb{Z}
&\Longleftrightarrow&
\textrm{both $p$ and $q$ are represented by}~
\left\{\begin{array}{l} x^2+5y^2,~\textrm{or}\\
2x^2+2xy+3y^2\end{array}\right.
\\
&\Longleftrightarrow&
\left\{\begin{array}{l}
p,q\equiv1,9\pmod{20},~\textrm{or}\\
p,q\equiv3,7\pmod{20}.
\end{array}\right.
\end{eqnarray*}
\end{example}

\begin{example}\label{example2}
Let $n=14$. We then have four reduced forms of discriminant $-4n=-56$
\begin{equation*}
x^2+14y^2,~2x^2+7y^2~\textrm{and}~3x^2\pm2xy+5y^2,
\end{equation*}
and we see that
\begin{equation}\label{twogenera}
\left\{\begin{array}{cccl}
x^2+14y^2,~2x^2+7y^2&\textrm{represents}&1,9,15,23,25,39&\textrm{in}~(\mathbb{Z}/56\mathbb{Z})^\times,\\
3x^2\pm2xy+5y^2&\textrm{represents}&3,5,13,19,27,45&\textrm{in}~(\mathbb{Z}/56\mathbb{Z})^\times
\end{array}\right.
\end{equation}
\cite[(2.21)]{Cox}. This shows that $n=14$ is a non-convenient number.
Observe that $3x^2+2xy+5y^2$ and $3x^2-2xy+5y^2$ are equivalent, but $x^2+5y^2$ and $2x^2+7y^2$ are not.
Furthermore, we obtain by Proposition \ref{Legendre} that for an odd prime $p$ not dividing $14$
\begin{eqnarray*}
&&\textrm{$p$ is represented by one of the above reduced forms}\\
&\Longleftrightarrow&
\bigg(\frac{-14}{p}\bigg)=1\\
&\Longleftrightarrow&p\equiv
1,3,5,9,13,15,19,23,25,27,39,45\pmod{56}.
\end{eqnarray*}
\par
Let $K=\mathbb{Q}(\sqrt{-14})$ and
$\mathcal{O}=\mathcal{O}_K=[1,\sqrt{-14}]$.
Then we know that $H_\mathcal{O}$ is generated by a real algebraic integer
$\alpha=\sqrt{2\sqrt{2}-1}$ whose
minimal polynomial over $K$ is $(x^2+1)^2-8$ with discriminant $-2^{14}\cdot7$ \cite[Proposition 5.31]{Cox}.
Thus we deduce by Proposition \ref{principalform} that
for an odd prime $p$ not dividing $14$
\begin{equation}\label{x2+14y2}
p=x^2+14y^2~\textrm{for some}~x,y\in\mathbb{Z}~
\Longleftrightarrow~\bigg(\frac{-14}{p}\bigg)=1~\textrm{and}~p\in S,
\end{equation}
where
\begin{equation*}
S=\{\textrm{primes}~p~|~(x^2+1)^2\equiv8\pmod{p}~\textrm{has an integer solution}\}
\end{equation*}
\cite[Theorem 5.33]{Cox}.
\par
Suppose that
there exists an odd prime $p$ not dividing $14$
which is represented by $x^2+14y^2$ and $2x^2+7y^2$, simultaneously.
Let
\begin{eqnarray}
&&p=x_1^2+14y_1^2\quad\textrm{for some}~x_1,y_1\in\mathbb{Z},\label{first}\\
&&p=2x_2^2+7y_2^2\quad\textrm{for some}~x_2,y_2\in\mathbb{Z}.\label{second}
\end{eqnarray}
Here, $x_1,y_1,x_2,y_2$ are all nonzero integers modulo $p$
because $p$ is relatively prime to $14$.
We claim by (\ref{first}) that
$p$ splits in $K=\mathbb{Q}(\sqrt{-14})$ and
\begin{equation}\label{pfactorization}
p\mathcal{O}_K=\mathfrak{p}\overline{\mathfrak{p}}\quad\textrm{with}~
\mathfrak{p}=(x_1+\sqrt{-14}y_1)\mathcal{O}_K~\textrm{and}~
\overline{\mathfrak{p}}=(x_1-\sqrt{-14}y_1)\mathcal{O}_K.
\end{equation}
Moreover, since $2$ is ramified in $K$, we have
\begin{equation}\label{2factorization}
2\mathcal{O}_K=\mathfrak{a}\overline{\mathfrak{a}}~\textrm{with}~\mathfrak{a}=\overline{\mathfrak{a}}.
\end{equation}
We then see that
\begin{eqnarray*}
(2p)\mathcal{O}_K&=&
(4x_2^2+14y_2^2)\mathcal{O}_K\quad\textrm{by (\ref{second})}\\
&=&(2x_2+\sqrt{-14}y_2)\mathcal{O}_K\overline{(2x_2+\sqrt{-14}y_2)\mathcal{O}_K}\\
&=&(2\mathcal{O}_K)(p\mathcal{O}_K)\\
&=&(\mathfrak{a}\overline{\mathfrak{a}})(\mathfrak{p}\overline{\mathfrak{p}})\quad
\textrm{by (\ref{pfactorization}) and (\ref{2factorization})}\\
&=&(\mathfrak{a}\mathfrak{p})\overline{(\mathfrak{a}\mathfrak{p})}\quad(=
(\mathfrak{a}\overline{\mathfrak{p}})\overline{(\mathfrak{a}\overline{\mathfrak{p}})}).
\end{eqnarray*}
So, without loss of generality we may assume that $\mathfrak{a}\mathfrak{p}=(2x_2+\sqrt{-14}y_2)\mathcal{O}_K$.
And, since $\mathfrak{p}$ is principal by (\ref{pfactorization}), we get that $\mathfrak{a}$
is also principal, say, $\mathfrak{a}=(u+\sqrt{-14}v)\mathcal{O}_K$ for some $u,v\in\mathbb{Z}$.
It then follows that
\begin{equation*}
2=N_{\mathcal{O}_K}(\mathfrak{a})=N_{K/\mathbb{Q}}(u+\sqrt{-14}v)=u^2+14v^2.
\end{equation*}
But there is no such pair of integers $u$ and $v$, which yields a contradiction. Hence we conclude
that
\begin{equation}\label{seperate}
\begin{array}{l}
\textrm{there is no odd prime $p$ not dividing $14$}\\
\textrm{which can be represented by both $x^2+14y^2$ and $2x^2+7y^2$}.
\end{array}
\end{equation}
\par
Therefore we achieve by Theorem \ref{main}, Proposition \ref{Legendre}, (\ref{twogenera}), (\ref{x2+14y2})
and (\ref{seperate}) that
for distinct odd primes $p$ and $q$ not dividing $14$
\begin{eqnarray*}
&&pq=x^2+14y^2~\textrm{for some}~x,y\in\mathbb{Z}\\
&\Longleftrightarrow&
\textrm{both $p$ and $q$ are represented by}~
\left\{\begin{array}{l}
x^2+14y^2,~\textrm{or}\\
2x^2+7y^2,~\textrm{or}\\
3x^2+2xy+5y^2
\end{array}\right.
\\
&\Longleftrightarrow&
\left\{\begin{array}{l}
p,q\equiv 1,9,15,23,25,39\pmod{56}~\textrm{and}~
p,q\in S,~\textrm{or}\\
p,q\equiv 1,9,15,23,25,39\pmod{56}~\textrm{and}~
p,q\not\in S,~\textrm{or}\\
p,q\equiv3,5,13,19,27,45\pmod{56}.
\end{array}\right.
\end{eqnarray*}
\end{example}

\bibliographystyle{amsplain}

\address{
Department of Mathematical Sciences \\
KAIST \\
Daejeon 305-701 \\
Republic of Korea} {jkkoo@math.kaist.ac.kr}
\address{
Department of Mathematics\\
Hankuk University of Foreign Studies\\
Yongin-si, Gyeonggi-do 449-791\\
Republic of Korea} {dhshin@hufs.ac.kr}
\end{document}